\documentclass[a4,12pt]{amsart}
\usepackage{amssymb,amstext,amsmath,amscd,amsthm,amsfonts,enumerate,latexsym}
\usepackage{color}
\usepackage{graphicx}
\usepackage[all]{xy}
\usepackage{stmaryrd} 

\usepackage{hyperref}
\usepackage{bm}

\theoremstyle{plain}
\newtheorem{thm}{Theorem}[section]

\newtheorem*{thm*}{Theorem}
\newtheorem*{cor*}{Corollary}

\newtheorem{lem}[thm]{Lemma}
\newtheorem{cor}[thm]{Corollary}

\newtheorem*{claim*}{Claim}

\theoremstyle{definition}

\newtheorem{ex}[thm]{Example}

\newtheorem{setup}[thm]{Setup}

\theoremstyle{remark}
\newtheorem{rem}[thm]{Remark}

\numberwithin{equation}{thm}

\newtheorem*{ac}{Acknowledgments}

\def\Im{\operatorname{Im}}

\def\m{\mathfrak m}
\def\n{\mathfrak n}

\newcommand{\rma}{\mathrm{a}}

\newcommand{\rme}{\mathrm{e}}

\newcommand{\rmr}{\mathrm{r}}

\newcommand{\rmI}{\mathrm{I}}

\newcommand{\rmK}{\mathrm{K}}

\newcommand{\calF}{\mathcal{F}}
\newcommand{\calG}{\mathcal{G}}

\newcommand{\mapright}[1]{%
\smash{\mathop{%
\hbox to 1cm{\rightarrowfill}}\limits^{#1}}}

\newcommand{\mapleft}[1]{%
\smash{\mathop{%
\hbox to 1cm{\leftarrowfill}}\limits_{#1}}}

\def\gr{\mbox{\rm gr}}

\def\ord{\operatorname{ord}}

\title[Almost Gorenstein standard graded rings and their localizations]{Almost Gorenstein standard graded rings \\ and their localizations}

\author[Naoki Endo]{Naoki Endo}
\address{School of Political Science and Economics, Meiji University, 1-9-1 Eifuku, Suginami-ku, Tokyo 168-8555, Japan}
\email{endo@meiji.ac.jp}
\urladdr{https://www.isc.meiji.ac.jp/~endo/}

\thanks{2020 {\em Mathematics Subject Classification.} 13H10, 13A02, 13B35}
\thanks{{\em Key words and phrases.} almost Gorenstein ring, standard graded ring, associated graded module}

\thanks{The author was partially supported by JSPS Grant-in-Aid for Scientific Research (C) 23K03058 and 26K06773.}

\begin{document}

\maketitle

\setlength{\baselineskip} {15.3pt}

\begin{abstract}
We prove that a Cohen-Macaulay standard graded integral domain over a field is almost Gorenstein as a graded ring if and only if the completion of its localization at the graded maximal ideal is almost Gorenstein as a local ring. 
\end{abstract}



\section{Introduction}

The aim of this paper is to prove the following.

\begin{thm}\label{main}
Let $A=\bigoplus_{i \ge 0}A_i$ be a Cohen-Macaulay standard graded integral domain over a field $k$ and $\n$ the graded maximal ideal of $A$. Then the $\n A_{\n}$-adic completion $\widehat{A_{\n}}$ is almost Gorenstein as a local ring if and only if $A$ is almost Gorenstein as a graded ring.
\end{thm}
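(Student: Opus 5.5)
We recall the definitions from Goto--Takahashi--Taniguchi. A Cohen--Macaulay local ring $(R,\m)$ with canonical module $\rmK_R$ is \emph{almost Gorenstein} if there is an exact sequence $0 \to R \to \rmK_R \to C \to 0$ with $C$ an Ulrich $R$-module. Here Ulrich means $\mu_R(C)=\rme^0_\m(C)$, and $C=0$ is allowed. In the graded setting one also requires $\rmK_A$ graded and the sequence $0 \to A \to \rmK_A(-\rma) \to C \to 0$ homogeneous, with $\rma=\rma(A)$ the $a$-invariant and $C$ a graded Ulrich $A$-module.

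The implication ``graded almost Gorenstein $\Rightarrow$ $\widehat{A_\n}$ almost Gorenstein'' is the easy direction. Localizing at $\n$ and completing is flat, so it preserves the exact sequence, and it sends $\rmK_A$ to $\rmK_{\widehat{A_\n}}$. Since $\mu$ and $\rme^0$ are computed at $\n$, it also preserves the Ulrich property. The whole weight of the argument therefore lies in the converse.

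For the converse, set $R=\widehat{A_\n}$ with maximal ideal $\m$, and suppose $0\to R\xrightarrow{\varphi}\rmK_R\to C\to 0$ is exact with $C$ Ulrich. The plan is to pass to associated graded objects with respect to $\m$-adic filtrations. Since $A$ is standard graded, $\gr_\m(R)\cong A$. We equip $\rmK_R$ with the $\m$-adic filtration, so that $\gr_\m(\rmK_R)\cong \rmK_A$ up to shift; this holds because $\rmK_R\cong \widehat{\rmK_A}$ and $\rmK_A$ is generated in a range of degrees compatible with this. The map $\varphi$ sends $1$ to an element $\xi\in \rmK_R$. If $R$ is not Gorenstein, $\xi\notin\m\rmK_R$ (this is a known fact, GTT Cor.~3.10), so $\xi$ has order $0$. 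Let $\xi^*$ be its initial form, which is a homogeneous element of the lowest degree component of $\rmK_A$, namely degree $-\rma$. The next step is to show that $A\xrightarrow{\xi^*}\rmK_A(-\rma)$ is injective. Here we use that $A$ is a domain: $\rmK_A$ is torsion-free of rank one, so any nonzero homogeneous element generates a free submodule. This is exactly where the domain hypothesis enters.

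Next we compare $C'=\rmK_A(-\rma)/A\xi^*$ with $C$. It is natural to take the induced filtration on $C$, for which $\gr(C)$ surjects onto, or equals, $C'$. One then checks three things. First, $\mu_A(C')=\mu_R(C)=\rmr(R)-1$, since both equal the number of generators of the canonical module minus one. Second, $\rme^0(C')=\rme^0(C)$, because the Hilbert function is preserved by the filtration; here one must verify that $\gr(C)$ coincides with $C'$, which amounts to $A\xi^*$ being the full initial module of $R\xi$, that is, $R\xi\cap \m^{n}\rmK_R=\m^n\xi$. Third, $C'$ is Cohen--Macaulay of dimension $d-1$, or zero.

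The main obstacle is the Cohen--Macaulayness of $C'$ (equivalently, the equality $\gr(R\xi)=A\xi^*$). I would attack it by choosing a minimal reduction $\mathfrak q$ of $\m C$ generated by a sequence $f_1,\dots,f_{d-1}$ superficial for both $R$ and $\rmK_R$, taken with linear initial forms. Using the Ulrich condition $\m C=\mathfrak q C$, I would reduce modulo this sequence to the case $d=1$ via Sally-type descent. In dimension one, the task is to show that $\m C=fC$ forces $\m\rmK_R\subseteq f\rmK_R+R\xi$ in a way that lifts to initial forms. The point is that $\rmK_R/R\xi$ has minimal multiplicity, so its $\m$-adic Hilbert function is linear, which forces $\gr$ to be Cohen--Macaulay. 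Once this holds, $C'$ is graded Ulrich and $A$ is almost Gorenstein as a graded ring. If this descent fails, the fallback is to replace $\xi$ by a suitable general element of $\rmK_R$ of order $0$ lifted from $[\rmK_A]_{-\rma}$, using openness of the almost Gorenstein condition for generic choices.
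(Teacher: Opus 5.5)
The easy direction is fine. The converse has a genuine gap, and it sits in the sentence ``$\gr_{\m}(\rmK_R)\cong\rmK_A$ up to shift''. Put $M=\rmK_A(-\rma)=\bigoplus_{j\ge0}M_j$ and $r=\rmr(A)$. Then $\gr_{\m}(\rmK_R)\cong\gr_{\n}(M)=\bigoplus_{i\ge0}\n^iM/\n^{i+1}M$. Its degree-$0$ piece $M/\n M$ has dimension $r$, whereas the lowest piece $M_0$ of $M$ only accounts for the minimal generators of degree $0$. So the two modules agree, even abstractly up to shift, only when $A$ is level. The theorem is meant for non-level rings too, e.g.\ the rings $B_n$ with $n$ even.

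Consequently, $\xi\notin\m\rmK_R$ does not produce an initial form in $[\rmK_A]_{-\rma}$. Write $\xi=(\xi_j)_{j\ge0}$ under $\rmK_R\cong\prod_{j\ge0}M_j$. The condition $\xi\notin\m\rmK_R$ is compatible with $\xi_0=0$: take $\xi=x\otimes1$ for a minimal generator $x$ of positive degree. Your $\xi^*$ then lies in $M/\n M$, not in $M_0$, and $C'=M/A\xi^*$ is not defined. What must be shown is that the particular $\xi=\varphi(1)$ supplied by the Ulrich sequence has $\xi_0\ne0$. The Sally-type descent does not address this. The ``generic choice'' fallback is also unsubstantiated: the hypothesis concerns the given $\varphi$, and transferring the Ulrich property to another element is exactly the missing step.

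The missing idea is to drop the $\m$-adic filtration and use the good $\m$-filtration $F_i=\bigl(\bigoplus_{j\ge i}M_j\bigr)\otimes_AR\cong\prod_{j\ge i}M_j$. For this filtration $\gr_{\calF}(\rmK_R)\cong M$ on the nose.
\begin{itemize}
\item For $0\ne f\in\rmK_R$ with lowest nonzero component $f_q$, the initial module of $Rf$ is $Af_q$. This is a one-line lowest-degree-term computation, and it is where the domain hypothesis enters.
\item Hence $\rme^0_{\m}(\rmK_R/Rf)=\rme(M/Af_q)=c+q\,\rme(A)$, where $c=\rme(M/Au)$ for any $0\ne u\in M_0$.
\item We have $c\ge r-1$, $\rme(A)\ge 2$ and $\mu_R(C)\le r$. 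So the Ulrich equality forces $q=0$.
\item Then $\mu_R(C)=r-1$ and $c=r-1$, which is exactly the graded almost Gorenstein property.
\end{itemize}

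What you call the main obstacle is not one. $C'$ is automatically Cohen--Macaulay of dimension $d-1$, being the cokernel of an injection $A\to\rmK_A(-\rma)$ (GTT, Lemma 3.1). That fact is unrelated to the equality $\gr(R\xi)=A\xi^*$, which follows from the same lowest-degree-term computation. With these corrections your outline is valid in the level case, where $\calF$ coincides with the $\m$-adic filtration.
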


The notion of an almost Gorenstein ring originated in the work of Barucci and Fr\"oberg \cite{BF} in the one-dimensional analytically unramified case. It was subsequently reformulated for one-dimensional Cohen-Macaulay local rings by Goto, Matsuoka, and Phuong \cite{GMP}, and was extended to Cohen-Macaulay local and graded rings of arbitrary dimension by Goto, Takahashi, and the author \cite{GTT}, using Ulrich modules. 
A fundamental issue in the theory is the relationship between the graded and local almost Gorenstein properties. If a Cohen-Macaulay graded ring $A=\bigoplus_{i \ge 0}A_i$ is almost Gorenstein as a graded ring, then its localization $A_{\n}$ at the graded maximal ideal $\n$ is almost Gorenstein as a local ring. The converse, however, fails in general. This naturally raises the question of when the almost Gorenstein property can be recovered from the localization, or from its completion.

Theorem \ref{main} provides a complete answer to this question for Cohen-Macaulay standard graded integral domains over a field. The assumption that the grading is standard cannot be omitted, as shown by \cite[Example 8.8]{GTT}. Moreover, the corresponding assertion need not hold for standard graded algebras over a local ring; see \cite[Theorems 2.7, 2.8]{GMTY1}. Nor can the assumption that $A$ is an integral domain be dropped: the conclusion need not hold even when $A$ is reduced; see \cite[Example 3.4]{EM} and \cite[Theorem 3.9]{GTT}.

The main difficulty in the converse implication is that localization and completion no longer retain the grading, whereas the graded almost Gorenstein property is formulated in terms of a homogeneous embedding into the graded canonical module. To overcome it, we introduce a filtration on the canonical module of the completion $\widehat{A_{\n}}$ induced by the grading on the canonical module of $A$. The associated graded module with respect to this filtration recovers a suitable shift of the graded canonical module of $A$. This enables us to relate an arbitrary embedding of the completion into its canonical module to the homogeneous initial form of the image of $1$, and thereby to derive an explicit formula for the multiplicity of the cokernel.


\section{Proof of Theorem \ref{main}}

We begin by recalling the notation for multiplicities that will be essential to our argument.

\subsection{Hilbert-Samuel multiplicity}
Let $(R,\m)$ be a Noetherian local ring and $N$ a non-zero finitely
generated $R$-module. Let $\calF=\{F_i\}_{i\geq 0}$ be a {\it good
$\m$-filtration} of $N$, that is, $\calF$ is a descending chain 
$
N=F_0 \supseteq F_1 \supseteq \cdots  \supseteq F_i \supseteq \cdots
$
of $R$-submodules of $N$ such that $\m F_i\subseteq F_{i+1}$ for all $i\geq 0$ and $\m F_i=F_{i+1}$ for $i\gg 0$.
The function $i \mapsto \ell_R(N/F_{i+1})$
agrees, for $i\gg 0$, with a polynomial of degree $\dim_R N$, where $\ell_R(-)$ denotes the length as an $R$-module.
We denote its normalized leading coefficient by $\rme_{\calF}(N)$, 
and call it the {\it Hilbert-Samuel multiplicity} of $N$ with respect to
$\calF$. In particular, for the $\m$-adic filtration
$\{\m^iN\}_{i\geq 0}$, we write $\rme_{\m}^0(N)$ for the corresponding Hilbert-Samuel multiplicity. 
The multiplicity is independent of the choice of a good $\m$-filtration; indeed, $\rme_{\calF}(N)=\rme_{\m}^{0}(N)$ for every such filtration $\calF$. See \cite[Section 1.3]{Rossi-Valla} for details. 

\subsection{Graded multiplicity}
Let $A=\bigoplus_{i\ge 0}A_i$ be a standard graded algebra
over a field $k=A_0$, and let $L$ be a non-zero finitely generated
graded $A$-module. We denote by
$$
H_L(t)=\sum_{i\in\Bbb Z}(\dim_kL_i)t^i
$$
the Hilbert series of $L$. Set $d=\dim_A L$, and write
$$
H_L(t)=\frac{h_L(t)}{(1-t)^d}
$$
where $h_L(t)$ is a Laurent polynomial with $h_L(1)\neq 0$.
The {\it multiplicity} of $L$ is defined by 
$$
\rme(L)=h_L(1).
$$ 
Equivalently, if $d>0$, $\rme(L)$ is the normalized leading coefficient of the Hilbert polynomial of $L$. Writing $\n=A_+$, we have $\rme(L)=\rme_{\n A_{\n}}^0(L_{\n})$
, where $A_+ = \bigoplus_{i > 0}A_i$.  
We also recall the relation with the Hilbert-Samuel multiplicity of a good filtration. Let $(R,\m)$ be a Noetherian local ring, $N$ a non-zero finitely generated $R$-module, and $\calF=\{F_i\}_{i\geq 0}$ a good $\m$-filtration of $N$. Then
$\rme_{\calF}(N)=\rme(\gr_{\calF}(N))$, 
where $\gr_{\calF}(N) = \bigoplus_{i \ge 0}F_i/F_{i+1}$ is regarded as a graded module over the standard graded algebra $\gr_{\m}(R)=\bigoplus_{i \ge 0}\m^i/\m^{i+1}$ over $R/\m$. 


\medskip

Throughout this paper, we adopt the following setting.

\begin{setup}\label{setup}
Let $A=\bigoplus_{i \ge 0}A_i$ be a Cohen-Macaulay standard graded integral domain over a field $k=A_0$ and $\n$ the graded maximal ideal of $A$. We denote by $R=\widehat{A_{\n}}$ the $\n A_{\n}$-adic completion of the local ring $A_{\n}$ and set $\m=\n R$. Let $\rmK_A$ be the graded canonical module of $A$ and $a=\rma(A)$ the $a$-invariant of $A$. Set $M=\rmK_A(-a)$.
\end{setup}

We first write the Hilbert series of $A$ as
\[
H_A(t)=\frac{h_A(t)}{(1-t)^d} \ \ \text{and} \ \ h_A(t)=h_0+h_1t+\cdots+h_st^s
\]
where $d=\dim A$ and $h_s\ne0$. Since $A$ is Cohen-Macaulay, it follows that $h_j\geq0$ for all $0\leq j\leq s$, and $a=s-d$. By the proof of \cite[4.4 Theorem]{Stanley} (see also \cite[Corollary 4.4.6]{BH}), we obtain
$$
H_M(t) = t^a H_{\rmK_A}(t) = \frac{t^{a+d}h_A(t^{-1})}{(1-t)^d} =  \frac{h_s + h_{s-1}t+ \cdots + h_0t^s}{(1-t)^d}.
$$
Thus, $M_i=(0)$ for all $i<0$ and $\dim_kM_0=h_s>0$. Hence
$M=\bigoplus_{i\geq0}M_i$ and $M_0\ne(0)$.

Choose homogeneous elements $x_1,x_2,\ldots,x_r$ of $M$ such that $M=\sum_{j=1}^r Ax_j$, where $r=\rmr(A)=\mu_A(M)$ is the Cohen-Macaulay type of $A$. Here, $\mu_A(-)$ denotes the minimal number of generators. Set $d_j=\deg x_j$ for each $1\leq j\leq r$. Since $M_i=(0)$ for all $i<0$, we have $d_j \ge 0$. Moreover, since $A$ is standard graded, we see that $\n^{\ell} = \bigoplus_{j \ge \ell}A_j$ for every $\ell\ge 0$. For each $i \ge 0$, set $E_i=\bigoplus_{j\geq i}M_j$. Then
$$
E_i=\sum_{j=1}^r\n^{q_j}x_j
$$
where $q_j=\max\{i-d_j,0\}$. Hence, setting $b=\max\{d_j\mid 1\leq j\leq r\}$, we have
$$
\n^i M \subseteq E_i \subseteq \n^{\max\{i-b, 0\}}M
$$
for all $i \ge 0$. In particular, for every $i\geq b$, we obtain $\n^i M \subseteq E_i \subseteq \n^{i-b}M$. 
It follows that the filtration $\{E_i\}_{i\geq0}$ and the
$\n$-adic filtration $\{\n^i M\}_{i\geq0}$ define the same topology
on $M$.
Consequently, setting $K=M \otimes_AR$, we have $K\cong\rmK_R$ and
$$
K \cong \widehat{M} = \varprojlim_{i}M/\n^i M \cong \varprojlim_{i}M/E_i \cong \varprojlim_{i} \left(\bigoplus_{0 \le j < i}M_j\right) \cong \prod_{j \ge 0}M_j
$$
where $\widehat{M}$ denotes the $\n$-adic completion of $M$.

For each $i \ge 0$, since $R$ is flat over $A$, the exact sequence $0 \to  E_i \to  M \to M/E_i \to 0$ induces an exact sequence
$$
0 \to E_i \otimes_A R \to M \otimes_A R \to (M/E_i) \otimes_AR \to 0
$$
of $R$-modules. As $M/E_i$ is naturally an $A/\n^i$-module, we have
\begin{eqnarray*}
(M/E_i) \otimes_AR \!\!&\cong&\!\! (M/E_i) \otimes_{A/\n^i}(R/\n^i R) \cong (M/E_i) \otimes_{A/\n^i}(A/\n^i) \\ 
\!\!&\cong&\!\! M/E_i \cong \bigoplus_{0 \le j < i}M_j. 
\end{eqnarray*}
Set $F_i = E_i \otimes_A R$, and identify $F_i$ with its image in $K=M\otimes_A R$. Then $F_i\cong\widehat{E_i}$, and, under the
identification $K=M \otimes_A R\cong \prod_{j \ge 0}M_j$, we have
$$
F_i = \{(g_j)_{j \ge 0} \in  \prod_{j \ge 0}M_j \mid g_0 = \cdots = g_{i-1}=0\} \cong \prod_{j \ge i}M_j. 
$$
Since $E_i = \sum_{j=1}^r\n^{q_j}x_j$, we have
$F_i = \sum_{j=1}^r \m^{q_j}(x_j \otimes 1)$. 
Thus, $\m F_i\subseteq F_{i+1}$ for every $i\ge 0$, with equality for $i\geq b$. 
Moreover, extending the inclusions $\n^i M \subseteq E_i \subseteq \n^{\max\{i-b, 0\}}M$ to $R$, we obtain $\m^i K \subseteq F_i \subseteq \m^{\max\{i-b, 0\}}K$ for all $i \ge 0$. 
Identifying $F_{i+1}$ with its image in $F_i$, we see that $\calF=\{F_i\}_{i\geq0}$ is a good $\m$-filtration of $K$. 
Furthermore, under the natural identification $\gr_{\m}(R)\cong A$, the above description of $F_i$ yields an isomorphism
$$
\gr_{\calF}(K) = \bigoplus_{i \ge 0}F_i/F_{i+1} \cong \bigoplus_{i \ge 0}M_i = M
$$
of graded $A$-modules.

\medskip

The following plays a key role in the proof of Theorem \ref{main}. The first two assertions are essentially due to \cite[Section 2]{Higashitani}. Although the base field is assumed there to be algebraically closed of characteristic zero, the proofs of the relevant results do not use this assumption. We therefore include brief proofs for completeness.

\begin{lem}\label{key}
Assume that $A$ is not Gorenstein. Then the following assertions hold true. 
\begin{itemize}
\item[$(1)$] The multiplicity $\rme(M/Au)$ is independent of the choice of $u\in M_0\setminus\{0\}$. We denote this common value by $c$.
\item[$(2)$] One has $c \ge r-1$, and the graded ring $A$ is almost Gorenstein if and only if $c = r-1$. 
\item[$(3)$] For every $f\in K \setminus \{0\}$, the equality $\rme^0_{\m}(K/Rf)
=c+\ord_{\calF}(f)\cdot\rme(A)$ holds, where $\ord_{\calF}(f) = \max\{j \ge 0 \mid f \in F_j\}$. 
\end{itemize}
\end{lem}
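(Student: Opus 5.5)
For $(1)$ I will compute the Hilbert series directly. Since $A$ is a domain and $M$ is a torsion-free rank one module, every nonzero $u\in M_0$ is a non-zerodivisor on $M$ in the sense that $A\to M$, $1\mapsto u$, is injective. This gives an exact sequence
\[
0\to A\xrightarrow{\,u\,} M\to M/Au\to 0
\]
of graded $A$-modules with degree-zero maps. Hence
\[
H_{M/Au}(t)=H_M(t)-H_A(t)=\frac{t^{s}h_A(t^{-1})-h_A(t)}{(1-t)^d},
\]
which does not depend on $u$. Since $A$ is not Gorenstein, $M/Au\neq0$, and $M/Au$ is Cohen--Macaulay of dimension $d-1$ or zero. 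Its multiplicity is read off from this series. The numerator vanishes at $t=1$, so we divide by $(1-t)$ and evaluate. This gives
\[
c=\frac{d}{dt}\bigl(h_A(t)-t^sh_A(t^{-1})\bigr)\Big|_{t=1},
\]
a quantity depending only on $h_A$. The dimension of $M/Au$ is exactly $d-1$ because it is a quotient of the rank one module $M$ by a rank one submodule. One should still double-check that the numerator has a simple zero; otherwise I will appeal to the general statement of \cite{GTT} that cokernels of such embeddings are zero or CM of dimension $d-1$.

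For $(2)$ I will invoke the theory of Ulrich modules from \cite{GTT}. Set $X=M/Au$, which is CM of dimension $d-1$ with $\mu_A(X)\ge r-1$. The general inequality $\rme(X)\ge\mu_A(X)$ for maximal CM modules (over $A/\mathrm{ann}X$) gives $c\ge r-1$. Graded almost Gorensteinness means there is some homogeneous embedding $A\to \rmK_A(-a)=M$ whose cokernel is Ulrich, i.e.\ $\rme=\mu$. The remaining point is that any such embedding must send $1$ to a nonzero element of $M_0$. Since $A$ is a domain, the image of $1$ must be a minimal generator, or else the cokernel would need $r$ generators. Its degree must be $0$, because a shift changes the degree; one uses $M_0\neq0$ and the fact that $M_i=0$ for $i<0$, as in \cite[Section 8]{GTT}. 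Hence almost Gorenstein holds iff $\mu(M/Au)=\rme(M/Au)$ for some $u\in M_0\setminus\{0\}$. Because $\mu(M/Au)=r-1$ for $u$ a minimal generator, this is equivalent to $c=r-1$.

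For $(3)$, let $j=\ord_{\calF}(f)$ and let $f^*\in M_j\setminus\{0\}$ be its initial form in $\gr_{\calF}(K)\cong M$. I will filter $K/Rf$ by the induced filtration $\overline{F}_i=(F_i+Rf)/Rf$, which is a good $\m$-filtration. Then $\rme^0_\m(K/Rf)=\rme(\gr_{\overline{\calF}}(K/Rf))$. By the standard exact sequence, $\gr_{\overline{\calF}}(K/Rf)\cong M/\gr(Rf)$, where $\gr(Rf)=\bigoplus_i (Rf\cap F_i+F_{i+1})/F_{i+1}$. The key claim is $\gr(Rf)=Af^*$. This holds because $A$ is a domain and $M$ is torsion-free, so $\gr_\m(R)f^*\cong A(-j)$ acts faithfully. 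Concretely, for $g\in R$ with initial form $g^*\in A_p$, we have $g^*f^*\neq0$, so $gf\in F_{p+j}\setminus F_{p+j+1}$ has initial form $g^*f^*$. Thus $\rme^0_\m(K/Rf)=\rme(M/Af^*)$.

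Finally I will compare $M/Af^*$ with $M/Au$. Using $0\to A(-j)\to M\to M/Af^*\to0$, the Hilbert series differs from the $u$-case by $\bigl(H_A(t)-t^jH_A(t)\bigr)=(1-t^j)h_A(t)/(1-t)^d$. Dividing by $(1-t)^{d-1}$ and evaluating at $1$ adds $j\,h_A(1)=j\,\rme(A)$. This gives $c+\ord_\calF(f)\,\rme(A)$.

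I expect the main obstacle to be the identification $\gr(Rf)=Af^*$ together with the dimension bookkeeping. The case $f^*$ a unit multiple situation, where $M/Af^*$ could drop dimension, cannot occur since $A$ is not Gorenstein.
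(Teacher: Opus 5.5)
Your proposal is correct and follows the paper's proof essentially step for step. You use the Hilbert series identity $H_{M/Au}=H_M-H_A$ for (1); for (2), the bound $\rme\ge\mu$ for the Cohen--Macaulay module $M/Au$ together with the definition of the graded property via degree-zero embeddings $A\to\rmK_A(-a)$; and for (3), the induced good filtration on $K/Rf$ with $\gr(Rf)=Af^*$ (from the domain property and $K\cong\prod_j M_j$), followed by the Hilbert series comparison.

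The one thing to tidy is in (2). Every $u\in M_0\setminus\{0\}$ is a minimal generator of $M$ simply because $(\n M)_0=(0)$, which follows from $M_i=(0)$ for $i<0$; it is not a consequence of $A$ being a domain. So drop the sentence about the cokernel ``needing $r$ generators,'' which does not by itself prove minimality.
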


\begin{proof}
Since $A$ is not Gorenstein, we have $d\ge 1$.

$(1)$ Let $u\in M_0\setminus\{0\}$. Since $A$ is an integral domain and $M=\rmK_A(-a)$ is torsion-free, the homomorphism $\varphi: A \to M$ defined by $\varphi(1)=u$ is injective. Hence the graded exact sequence $0 \to A \overset{\varphi}{\to} M \to M/Au \to 0$ of $A$-modules yields
$$
H_{M/Au}(t)= H_M(t) - H_A(t). 
$$
Since the right-hand side is independent of the choice of $u\in M_0\setminus\{0\}$, so is the Hilbert series of $M/Au$. In particular, the multiplicity $\rme(M/Au)$ is independent of the choice of $u$. We denote this common value by $c$.

$(2)$ Let $u\in M_0\setminus\{0\}$, and set $D=M/Au$. Since $A$ is not Gorenstein, we have $D\ne(0)$. Then $D$ is a Cohen-Macaulay $A$-module with $\dim_A D=d-1$ (see \cite[Lemma 3.1]{GTT}). Moreover, since $M_i=(0)$ for all $i<0$, we have $(A_+M)_0=(0)$. Hence $u\notin A_+M=\n M$, and therefore $\mu_A(D) = r-1$. By $(1)$, we have $c=\rme(D)$. Thus
$$
c = \rme(D) = \rme^0_{\n A_\n}(D_\n) \ge \mu_{A_\n}(D_{\n}) = \mu_A(D) =r-1. 
$$
By the above observation, $A$ is almost Gorenstein if and only if there exists $u\in M_0\setminus\{0\}$ such that $\rme(M/Au)=r-1$. By $(1)$, the latter condition is equivalent to $c=r-1$.

$(3)$ Set $C=K/Rf$. For each $i\ge 0$, define $\overline{F}_i = (F_i + Rf)/Rf$ and $G_i = Rf \cap F_i$. Then $\overline{\calF}=\{\overline{F}_i\}_{i\ge 0}$ and
$\calG=\{G_i\}_{i\ge 0}$ are $\m$-filtrations of $C$ and $Rf$, respectively (see \cite[Definition 8.1]{GOTWY}). Moreover, the inclusion $\iota: Rf \to K$ and the natural surjection $\varepsilon:K\to C$ are strict homomorphisms. By \cite[Lemma 8.4]{GOTWY}, the exact sequence $0 \to Rf \to K \to C \to 0$ induces an exact sequence
$$
0 \to \gr_{\calG}(Rf) \to \gr_{\calF}(K) \to \gr_{\overline{\calF}}(C) \to 0
$$
of graded modules.
We now determine the image of the composite map
$$
\xi:\gr_{\calG}(Rf)\to\gr_{\calF}(K)\cong M.
$$
Set $q=\ord_{\calF}(f)$. Then $f\in F_q\setminus F_{q+1}$. Let $f^*\in M_q$ denote the image of $f+F_{q+1}\in F_q/F_{q+1}$
via the natural isomorphism $F_q/F_{q+1}\cong M_q$. Recall that, under the identification $K \cong \prod_{j \ge 0}M_j$, we have $F_i =  \{(g_j)_{j \ge 0} \in  K \mid g_0 = \cdots = g_{i-1}=0\}$. Thus, writing $f=(f_j)_{j\geq0}$ with $f_j\in M_j$, it follows that $q = \min \{j \ge 0 \mid f_j \ne 0\}$ and $f^* = f_q$.

For every $i<q$, since $Rf\subseteq F_q\subseteq F_{i+1}$, we have $(\Im \xi)_i=(0)$. Let $i\geq q$, and take $\alpha\in(\Im \xi)_i$. Write $\alpha = \xi(\overline{rf})$ for some $r\in R$ with $rf\in F_i$, where $\overline{rf}$ denotes 
the image of $rf$ in $G_i/G_{i+1}$. We may assume that $\alpha\ne0$. Then $rf\in F_i\setminus F_{i+1}$ and $r\ne0$, so that $i=\ord_{\calF}(rf)$. 
Under the identification $R\cong\prod_{j\geq0}A_j$, write $r=(r_j)_{j\geq0}$ with $r_j\in A_j$, and set $p=\min\{j\geq0\mid r_j\ne0\}$. Since
$$
f = (0, \ldots, 0, f_q, f_{q+1}, \ldots) \ \ \text{with} \ \  f_q = f^* \ne 0,
$$
the homogeneous component of $rf$ of degree $p+q$ is $r_pf_q=r_pf^*$. Since $A$ is an integral domain and $M$ is torsion-free, we have $r_pf^*\ne 0$. It follows that $\ord_{\calF}(rf) = p + q$ and $(rf)^* = r_p f^*$. Hence $p=i-q$, and therefore
$\alpha=r_{i-q}f^*\in A_{i-q}f^*$. Thus $(\Im \xi)_i\subseteq A_{i-q}f^*$. Conversely, if $0 \ne a\in A_{i-q}$, then $\ord_{\calF}(af)=i$ and $(af)^*=af^*$. Hence $(\Im \xi)_i=A_{i-q}f^*$ for every $i\geq q$. 
Consequently, $\Im \xi = \bigoplus_{i \ge q}A_{i-q}f^* = Af^*$. Since $\gr_{\calF}(K)\cong M$, the above exact sequence now yields an isomorphism
$$
\gr_{\overline{\calF}}(C) \cong M/Af^*
$$
of graded $A$-modules. 
Note that $\overline{\calF}$ is a good $\m$-filtration of $C$. Indeed
$$
\overline{F}_{i+1}
=\frac{F_{i+1}+Rf}{Rf}
=\frac{\m F_i+Rf}{Rf}
=\m\overline{F}_i
$$
for every $i\geq b$.
Hence $\rme_{\overline{\calF}}(C) = \rme^0_{\m}(C)$, so that $\rme^0_{\m}(C) = \rme_{\overline{\calF}}(C) = \rme(\gr_{\overline{\calF}}(C)) = \rme(M/Af^*)$. 

Finally, since $f^*\in M_q\setminus\{0\}$, we have an exact sequence 
$$
0 \to A(-q) \to M \to M/Af^* \to 0
$$
of graded $A$-modules. Choose $0\ne u\in M_0$, and set $D=M/Au$, so that $\rme(D)=c$. 
Thus, $H_{M/Af^*}(t)=H_M(t)-t^qH_A(t)=H_D(t)+(1-t^q)H_A(t)$. 
Write 
$$
H_D(t)=\frac{h_D(t)}{(1-t)^{d-1}} \ \ \text{with} \ \ h_D(t) \in \Bbb Z[t]. 
$$ 
If $q\ge 1$, we then have 
$$
H_{M/Af^*}(t)=\frac{h_D(t)}{(1-t)^{d-1}}+
\frac{(1+t+\cdots+t^{q-1})h_A(t)}{(1-t)^{d-1}}.
$$
Therefore, $\rme(M/Af^*)=h_D(1)+q \cdot h_A(1)=\rme(D)+q\cdot\rme(A)
=c+q\cdot\rme(A)$. The same equality is immediate when $q=0$. Consequently,
$\rme^0_{\m}(C) =c + \ord_{\calF}(f)  \cdot\rme(A)$, as desired.
\end{proof}


We are now ready to prove Theorem \ref{main}. 

\begin{proof}[Proof of Theorem \ref{main}]
We retain the notation of Setup \ref{setup}, together with that introduced thereafter. We may assume that $A$ is not Gorenstein. Thus, $d=\dim A\ge 1$. Suppose first that $A$ is almost Gorenstein as a graded ring. Then the local ring $A_{\n}$ is  almost Gorenstein. Hence, by \cite[Theorem 3.8]{GTT}, its completion $R=\widehat{A_{\n}}$ is also almost Gorenstein. 
Conversely, suppose that $R$ is almost Gorenstein. Then there exists an exact sequence
$$
0 \to R \overset{\varphi}{\to} K \to C \to 0
$$
of $R$-modules such that $\rme^0_{\m}(C) = \mu_R(C)$. Set $f=\varphi(1)$. Then $f\in K\setminus\{0\}$ and $C\cong K/Rf$. Hence, by Lemma \ref{key} (3), it follows that
$$
\rme^0_{\m}(C)=\rme^0_{\m}(K/Rf)
=c+q\cdot\rme(A)
$$
where $q=\ord_{\calF}(f)$.
We claim that $q=0$. Suppose, to the contrary, that $q\geq1$. Since $\mu_R(C)=\mu_R(K/Rf) \le \mu_R(K) =r$ and $\rme(A)\ge 2$, Lemma \ref{key} (2) yields
$$
\rme^0_{\m}(C) \ge (r-1) + q\cdot\rme(A) \ge (r-1) + \rme(A) \ge r+1 > r \ge \mu_R(C)
$$
which contradicts $\rme_{\m}^0(C)=\mu_R(C)$. Thus $q=0$, so that
$f\in F_0 \setminus F_1$. As $\m K\subseteq F_1$, we get $f\notin\m K$, and hence $\mu_R(C)=\mu_R(K/Rf)=r-1$. 
Consequently
$$
c = \rme^0_{\m}(C) = \mu_R(C) = r-1. 
$$
By Lemma \ref{key} (2), we conclude that $A$ is almost Gorenstein as a graded ring.
\end{proof}

\begin{rem}\label{2.3}
Suppose that the field $k$ is infinite. By \cite[Theorem 3.9]{GTT}, the almost Gorenstein property of $A_{\n}$ is equivalent to that of its completion $R$. Combining this with Theorem \ref{main}, it follows that $A$ is almost Gorenstein as a graded ring if and only if $A_{\n}$ is almost Gorenstein as a local ring.
\end{rem}

The preceding argument yields the following quantitative refinement of Theorem \ref{main}.

\begin{cor}
Assume that $A$ is not Gorenstein. Then
$$
\min_{0\ne f\in K}\left\{\rme_{\m}^0(K/Rf)-\mu_R(K/Rf)\right\}=c-r+1.
$$
Moreover, for $0\ne f\in K$, the minimum is attained if and only if
$\ord_{\calF}(f)=0$.
\end{cor}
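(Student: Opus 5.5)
We compute the quantity $\rme_{\m}^0(K/Rf)-\mu_R(K/Rf)$ for each $0\ne f\in K$ by splitting according to $q=\ord_{\calF}(f)$. Lemma \ref{key} (3) gives $\rme_{\m}^0(K/Rf)=c+q\cdot\rme(A)$ in every case, so it remains to control $\mu_R(K/Rf)$.

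\emph{Case $q=0$.} Here $f\in F_0\setminus F_1$. Since $\m K\subseteq F_1$, we get $f\notin\m K$, so $f$ is part of a minimal generating system of $K$. Hence $\mu_R(K/Rf)=\mu_R(K)-1=r-1$, using $\mu_R(K)=\mu_A(M)=r$, which holds because $K=M\otimes_AR$ with $R$ faithfully flat over $A_{\n}$. The difference is therefore exactly $c-(r-1)=c-r+1$. Such $f$ exist: take $f=u\otimes1$ with $0\ne u\in M_0$. So the value $c-r+1$ is attained.

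\emph{Case $q\ge1$.} We only need the trivial bound $\mu_R(K/Rf)\le\mu_R(K)=r$. This gives
$$
\rme_{\m}^0(K/Rf)-\mu_R(K/Rf)\ \ge\ c+q\cdot\rme(A)-r\ \ge\ c-r+\rme(A).
$$
Since $A$ is not Gorenstein, it is not regular, so $\rme(A)\ge2$ (as already used in the proof of Theorem \ref{main}). Thus the difference is at least $c-r+2$, which is strictly larger than $c-r+1$.

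Combining the two cases, the minimum equals $c-r+1$, and it is attained precisely when $\ord_{\calF}(f)=0$. There is no real obstacle, since everything reduces to Lemma \ref{key} (3). The only points needing care are the identification $\mu_R(K)=r$ and the inequality $\rme(A)\ge2$. For the latter: if $\rme(A)=1$, then the Cohen--Macaulay standard graded ring $A$ would have $h_A(t)=1$, so $A$ would be a polynomial ring and hence Gorenstein.
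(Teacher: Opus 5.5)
Your proposal is correct and follows the paper's argument: you apply Lemma \ref{key}~(3), get $\mu_R(K/Rf)=r-1$ when $\ord_{\calF}(f)=0$, and use $\mu_R(K/Rf)\le r$ and $\rme(A)\ge 2$ to obtain the bound $c-r+2$ when $\ord_{\calF}(f)\ge 1$. Your additional checks, namely that $f=u\otimes 1$ with $0\ne u\in M_0$ realizes the minimum and the justification of $\rme(A)\ge 2$ via $h_A(t)=1$, fill in details the paper leaves implicit.
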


\begin{proof}
The assertion follows from Lemma \ref{key}~(3) and the argument in the
proof of Theorem \ref{main}. Indeed, if $\ord_{\calF}(f)=0$, then $\mu_R(K/Rf)=r-1$, whereas if $\ord_{\calF}(f)\geq1$, then $\rme_{\m}^0(K/Rf)-\mu_R(K/Rf)\geq c-r+2$. 
\end{proof}

\section{Corollaries and Examples}

Recall that a Cohen-Macaulay local ring $(R, \m)$ has {\it minimal multiplicity} if $\rme(R) = \mu_R(\m) - \dim R + 1$. 

\begin{cor}
Let $A=\bigoplus_{i \ge 0}A_i$ be a Cohen-Macaulay standard graded integral domain over an infinite field $k=A_0$ and $\n$ the graded maximal ideal of $A$. Assume that $A$ is level but not Gorenstein. Then the $\n A_{\n}$-adic completion $\widehat{A_{\n}}$ is almost Gorenstein as a local ring if and only if it has minimal multiplicity. 
\end{cor}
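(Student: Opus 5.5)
By Theorem \ref{main} (together with Remark \ref{2.3}), it suffices to show that $A$ is almost Gorenstein as a graded ring if and only if $R$ has minimal multiplicity. Both conditions can be read off from the $h$-vector $h_A(t)=h_0+h_1t+\cdots+h_st^s$. Throughout we use $d\ge 1$ (since $A$ is not Gorenstein), $h_0=1$, $h_1=\dim_k A_1-d$, and $\rme(A)=h_A(1)$.

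\emph{Minimal multiplicity.} Since $\gr_{\m}(R)\cong A$ and $\mu_R(\m)=\dim_k A_1$, we have $\rme^0_{\m}(R)=\rme(A)=h_A(1)$ and $\mu_R(\m)-\dim R+1=h_0+h_1$. Hence $R$ has minimal multiplicity if and only if $h_2=\cdots=h_s=0$, that is, $s\le 1$. Since $A$ is not Gorenstein, $h_A(t)\neq 1$, so this is equivalent to $s=1$ with $h_1\ge 1$.

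\emph{Computing $c$.} By Lemma \ref{key}, $c=\rme(M/Au)$ and $H_{M/Au}(t)=H_M(t)-H_A(t)$. The numerator of this difference over $(1-t)^d$ is
$$
\sum_{j=0}^s(h_{s-j}-h_j)t^j,
$$
which vanishes at $t=1$. Its derivative at $1$ gives
$$
c=\sum_{j}(h_j-h_{s-j})\,j=\sum_j (2j-s)h_j.
$$

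\emph{Using the level hypothesis.} Since $A$ is level, all generators of $M$ lie in degree $0$, so $r=\dim_k M_0=h_s$. Lemma \ref{key}(2) then says that $A$ is almost Gorenstein if and only if $\sum_j(2j-s)h_j=h_s-1$. The almost Gorenstein condition therefore becomes
$$
\sum_{j=0}^{s-1}(2j-s)h_j+(s-1)h_s=-1.
$$

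\emph{Main step: comparing the two conditions.} One direction is immediate. If $s=1$, the left-hand side is $-h_0=-1$, so the condition holds. For the converse, suppose $s\ge 2$ and the condition holds; we need a contradiction. Pairing $j$ with $s-j$ shows that $c-(r-1)=\rme(D)-\mu(D)$ is a sum of nonnegative contributions, and we do not expect this alone to be positive. So we would bring in the full structure: the cokernel $D=M/Au$ is Cohen--Macaulay of dimension $d-1$ and generated in degree $0$. Then $\rme(D)=\mu(D)$ would force $D$ to be an Ulrich-type module with $h$-polynomial constant equal to $r-1$. Concretely, $h_{s-j}-h_j=(1-t)\cdot(\text{const})$ coefficientwise, and it suffices to compare coefficients.

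\emph{Expected obstacle.} We expect this last step to be the crux: ruling out $s\ge 2$ from the equality $(h_s-h_0)+\sum_{j\ge1}(h_{s-j}-h_j)t^j=(r-1)(1-t)$. Matching coefficients gives $h_{s-1}=h_1-(r-1)$ and the remaining pairs symmetric, so $h_{s-1}-h_1=-(h_s-1)$. Combining this with $h_{s-1}\ge h_1$, which should follow from levelness (for instance via the injectivity of multiplication by $A_1$ on $M$ in low degrees, or via Stanley's inequalities for level algebras in a domain), would force $h_s=1$, i.e.\ $A$ Gorenstein, a contradiction. This levelness-based inequality is the point we are least sure of and would need to check carefully.
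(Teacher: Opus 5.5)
\textbf{Verdict: there is a gap in the final step, but it is easily closed.} Your reduction to the graded statement via Theorem~\ref{main} is correct. So are the translation of minimal multiplicity of $R$ into $s=1$, the formula $c=\sum_j(2j-s)h_j$, the identification $r=h_s$ from levelness, and the implication $s=1\Rightarrow c=r-1$.

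\textbf{The gap.} In the converse you exclude $s\ge 2$ by invoking the inequality $h_{s-1}\ge h_1$. You do not prove it, and nothing established in the paper supplies it. As written, the argument stops exactly at the step you call the crux.

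\textbf{The fix.} No such inequality is needed, because the identity you derived already contains the contradiction:
\[
\sum_{j=0}^{s}(h_{s-j}-h_j)t^j=(r-1)(1-t).
\]
For $s\ge 2$, compare the coefficients of $t^s$. The left side gives $h_0-h_s=1-r$ and the right side gives $0$, so $r=1$ and $A$ is Gorenstein, a contradiction. Your own phrase ``remaining pairs symmetric'' already includes the pair $j=s$, which says $h_s=h_0=1$.

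A cleaner way to see the same thing uses partial sums. Set $P_i=\sum_{j=0}^{i}(h_{s-j}-h_j)$. Then:
\begin{itemize}
\item $h_D(t)=\sum_{i=0}^{s-1}P_it^i$;
\item every $P_i\ge 0$, because $D$ is Cohen--Macaulay with $D_i=0$ for $i<0$ (take a linear system of parameters for $D$, available since $k$ is infinite; this is Stanley's inequality);
\item $P_0=P_{s-1}=h_s-1=r-1$.
\end{itemize}
Hence, if $s\ge 2$,
\[
c=h_D(1)\ge P_0+P_{s-1}=2(r-1)>r-1 .
\]
So your remark that the nonnegative contributions need not be positive is too pessimistic: the last partial sum always equals $r-1\ge 1$.

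\textbf{Comparison with the paper.} With this fix your proof is complete, and it takes a different route. The paper simply quotes \cite[Theorem 1.6]{GTT} for the graded equivalence (for level, non-Gorenstein $A$, graded almost Gorenstein if and only if minimal multiplicity) and combines it with Theorem~\ref{main}. Your argument instead derives that graded equivalence directly from Lemma~\ref{key}(1),(2) and the Hilbert series. This makes it transparent that levelness enters only through the equality $r=h_s$.
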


\begin{proof}
The assertion follows from \cite[Theorem 1.6]{GTT} and Theorem \ref{main}. 
\end{proof}

Theorem \ref{main}, together with Remark \ref{2.3} and \cite[Theorem 1.6]{GTT}, recovers the following results on generic and symmetric determinantal rings. 

\begin{cor}[{\cite[Theorem 1.1]{Taniguchi}}]
Let $m$, $n \ge 2$ be integers and $X = \left[X_{ij}\right]$ be an $m\times n$ matrix of indeterminates over an infinite field $k$. We denote by $S=k[X]$ the polynomial ring generated by $\{X_{ij}\}_{1 \le i \le m,\,1\le j \le n}$ over $k$. Let $\rmI_t(X)$ be the ideal of $S$ generated by the $t \times t$-minors of $X$, where $2 \le t \le \min\{m, n\}$. Set $A = S/{\rm I}_t(X)$ and $\n = A_+$.  
Then the following conditions are equivalent.
\begin{itemize}
\item[$(1)$] $A$ is an almost Gorenstein graded ring.
\item[$(2)$] $A_{\n}$ is an almost Gorenstein local ring.
\item[$(3)$] Either $m=n$, or $m\ne n$ and $t=\min\{m, n\}=2$.
\end{itemize}
\end{cor}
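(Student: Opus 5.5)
The plan is to deduce the statement from Theorem \ref{main} and Remark \ref{2.3}, together with the known characterization of almost Gorenstein local rings among determinantal rings, which is where \cite[Theorem 1.6]{GTT} enters. First, $A=S/\rmI_t(X)$ is a Cohen-Macaulay standard graded integral domain over the infinite field $k$. This is classical: Hochster--Eagon give Cohen-Macaulayness and primeness of $\rmI_t(X)$, and the grading is standard because $\rmI_t(X)$ is generated by forms of degree $t$. Hence Setup \ref{setup} applies, and Remark \ref{2.3} immediately gives $(1)\Leftrightarrow(2)$.

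It remains to prove $(1)\Leftrightarrow(3)$, or equivalently $(2)\Leftrightarrow(3)$.

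If $m=n$, then $A$ is Gorenstein, by Svanes or Bruns--Vetter, hence almost Gorenstein. So assume $m\neq n$, say $m<n$. Then $A$ is not Gorenstein. I would use the known structure of the canonical module of a determinantal ring: $\rmK_A$ is, up to shift, the $(n-m)$-th symbolic power of the prime ideal $\mathfrak p$ generated by the $(t-1)$-minors of the first $t-1$ rows. Its minimal generators all lie in a single degree, so $A$ is level.

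For level rings with $k$ infinite, \cite[Theorem 1.6]{GTT} (as used in the preceding corollary) says that the almost Gorenstein property of $A_{\n}$ is equivalent to minimal multiplicity, namely
$$\rme(A)=\mu(\n)-\dim A+1,$$
which in the standard graded Cohen-Macaulay case means that the $h$-vector has length $\le 2$, i.e.\ $\rma(A)\le 1-d$ (equivalently $\n^2=Q\n$ after a generic reduction). Thus the problem reduces to deciding when $S/\rmI_t(X)$ has minimal multiplicity.

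I would settle this through the $h$-vector, or the Castelnuovo--Mumford regularity of $A$:
\begin{itemize}
\item For $t=2$, the ring $A$ is the Segre product of two polynomial rings, whose $h$-vector is $h_j=\binom{m-1}{j}\binom{n-1}{j}$. Since the ideal is quadratic, minimal multiplicity would force $h_2=0$, which is not the case for $m,n\ge 3$. Hence the condition in (3) must come out when $t=\min\{m,n\}=2$, that is, $m=2$: the $2\times n$ minors define a rational normal scroll, which has minimal multiplicity.
\item If $t\ge 3$, the ideal $\rmI_t(X)$ is generated in degree $t\ge 3$, so $h_2=\binom{mn+1}{2}-\binom{mn+1}{2}\neq 0$ fails trivially, i.e.\ $\dim A_2$ is the full $\dim S_2$. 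This gives $h_2>0$, and minimal multiplicity fails.
\item For $t=2<\min\{m,n\}$, I would compute $h_2>0$ from the Segre formula above.
\end{itemize}

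The main obstacle is verifying levelness and the precise form of (3) against these $h$-vector computations. I expect the levelness of non-maximal minors to be the delicate input, and I would cite Bruns--Vetter for it rather than reprove it.
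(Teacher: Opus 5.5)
Your proposal follows essentially the route the paper indicates: Remark~\ref{2.3} gives $(1)\Leftrightarrow(2)$, and for $m\ne n$ the levelness of $A$ together with \cite[Theorem 1.6]{GTT} (via the preceding corollary) reduces the question to minimal multiplicity, i.e.\ to $h_2=0$, which your $h$-vector analysis then settles. Two slips need fixing, neither affecting the argument: in the $t\ge 3$ bullet the displayed expression is $\dim_k S_2-\dim_k A_2=\dim_k[\rmI_t(X)]_2=0$, not $h_2$, and the correct computation is $h_2=\dim_k A_2-d\dim_k A_1+\binom{d}{2}=\binom{h_1+1}{2}-\dim_k[\rmI_t(X)]_2=\binom{h_1+1}{2}>0$ with $h_1=mn-d\ge 1$; and for $m<n$ the canonical module is, up to shift, the $(n-m)$-th power of the ideal of $(t-1)$-minors of the first $t-1$ \emph{columns} (rows when $m>n$), which is still generated in the single degree $(t-1)|n-m|$.
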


\begin{cor}[{\cite[Theorem 1.1]{CELW}}]
Let $n > 0$ be an integer and $X = \left[X_{ij}\right]$ be an $n \times n$ symmetric matrix of indeterminates over an infinite field $k$. We denote by $S = k[X]$ the polynomial ring over $k$ generated by $\{X_{ij}\}_{1 \le i,\, j \le n}$.  
Let $\rmI_{t+1}(X)$ be the ideal of $S$ generated by $(t + 1) \times (t + 1)$ minors of $X$, where $0 < t < n$. Set $A = S/\rmI_{t+1}(X)$ and $\n = A_+$. 
Then the following conditions are equivalent.
\begin{itemize}
\item[$(1)$] $A$ is an almost Gorenstein graded ring.
\item[$(2)$] $A_{\n}$ is an almost Gorenstein local ring.
\item[$(3)$] Either $n - t$ is odd, or $n = 3$ and $t = 1$. 
\end{itemize}
\end{cor}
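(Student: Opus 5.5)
The statement to prove is the last corollary, on symmetric determinantal rings. It has the same shape as the corollary before it on generic determinantal rings, and I will prove it the same way. Theorem \ref{main} and Remark \ref{2.3} identify the graded and local almost Gorenstein properties, and the combinatorial condition $(3)$ is then decided by level/minimal multiplicity considerations via \cite[Theorem 1.6]{GTT}, or by quoting the computation of \cite{CELW}.

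First I check that $A=S/\rmI_{t+1}(X)$ fits Setup \ref{setup}. It is standard graded over the infinite field $k$, since the minors are homogeneous and $S$ is generated in degree one. It is an integral domain, because ideals of minors of a generic symmetric matrix are prime. It is Cohen--Macaulay by Kutz's theorem. With these in hand, Remark \ref{2.3} (which needs $k$ infinite) gives $(1)\Leftrightarrow(2)$ at once. The combinatorial part is therefore only the equivalence of $(1)$ or $(2)$ with $(3)$.

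For $(3)$, recall Goto's theorem: $A$ is Gorenstein if and only if $n-t$ is odd. In that case $A$ is trivially almost Gorenstein, so I may assume $n-t$ is even and $A$ is not Gorenstein. I would then use that symmetric determinantal rings are level. The canonical module is, up to shift, a power of the height-one prime generated by the $t$-minors of the first $t$ rows, and it is generated in a single degree. With levelness, the preceding corollary (via \cite[Theorem 1.6]{GTT} and Theorem \ref{main}) reduces the question to whether $A_{\n}$ has minimal multiplicity. Minimal multiplicity for a standard graded Cohen--Macaulay domain means the $h$-vector has the form $(1,h_1)$, i.e.\ $\n^2$ is generated by a regular sequence modulo $\n^2$; equivalently $\rma(A)=1-d$. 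Using the known $a$-invariant $\rma(A)=-tn/2$ for $n-t$ even, together with $\dim A=nt-t(t-1)/2$, the condition $\rma(A)=1-d$ gives
\[
tn/2 = nt - t(t-1)/2 - 1 .
\]
This rearranges to $t(n-t+1)=2$, forcing $t=1$ and $n=2$, which contradicts $n-t$ even. So I expect this route to give nothing, and I must re-examine. In particular, the case $n=3$, $t=1$ satisfies $n-t=2$ and does have minimal multiplicity: it is the Veronese-type ring $k[X]/\rmI_2$ of dimension $3$ with $h$-vector $(1,3)$. The arithmetic above is therefore wrong, most likely in the $a$-invariant or in applying the minimal multiplicity criterion. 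So the step is to compute $\rma(A)$ (equivalently the degree of the generators of $\rmK_A$) correctly and solve $\rma(A)=1-d$. I expect the unique non-Gorenstein solution to be $(n,t)=(3,1)$, matching $(3)$.

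The main obstacle is this computation: establishing levelness and the exact $h$-vector or $a$-invariant of $S/\rmI_{t+1}(X)$. If that becomes heavy, the fallback is to cite the full characterization in \cite[Theorem 1.1]{CELW} for the local statement $(2)\Leftrightarrow(3)$ and combine it with $(1)\Leftrightarrow(2)$ from Theorem \ref{main} and Remark \ref{2.3}. This suffices because the corollary is stated as recovering that result.
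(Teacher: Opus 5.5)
Your route is the one the paper intends. You get $(1)\Leftrightarrow(2)$ from Theorem \ref{main} and Remark \ref{2.3}, and use Goto's criterion for the case $n-t$ odd. For $n-t$ even you use levelness of $A$ together with \cite[Theorem 1.6]{GTT}, which reduces everything to deciding when $\rma(A)=1-d$, i.e.\ when $\deg h_A(t)=1$.

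\textbf{The gap.} The one computation that actually decides condition $(3)$ is wrong, and you leave it unrepaired. The value $\rma(A)=-tn/2$ you use is the $a$-invariant in the \emph{Gorenstein} case $n-t$ odd, where $\rmK_A\cong A(-tn/2)$. For $n-t$ even it cannot be right:
\begin{itemize}
\item When $n$ and $t$ are both odd, $tn/2$ is not even an integer.
\item For $t=1$, $A$ is the second Veronese subring of $k[y_1,\dots,y_n]$, whose $a$-invariant is $-\lceil n/2\rceil=-(n+1)/2$ for $n$ odd.
\end{itemize}
The correct value for $n-t$ even is $\rma(A)=-t(n+1)/2$, equivalently $\deg h_A(t)=\rma(A)+d=t(n-t)/2$. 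For instance, for $(n,t)=(4,2)$, J\'ozefiak's resolution of the $3\times 3$ minors gives $h_A(t)=1+3t+6t^2$, so $\rma(A)=-5$, not $-4$.

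With the correct value, the condition $\rma(A)=1-d$ becomes $t(n-t)=2$. Since $n-t$ is even and positive, this forces $n-t=2$ and $t=1$, i.e.\ $(n,t)=(3,1)$, which is exactly condition $(3)$. Without this step, or a precise reference for the $a$-invariant (or $h$-vector) of symmetric determinantal rings in the non-Gorenstein case, you have not established $(1)\Leftrightarrow(3)$. Writing ``I expect the unique solution to be $(3,1)$'' is not an argument.

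\textbf{The fallback.} Citing \cite[Theorem 1.1]{CELW} for $(2)\Leftrightarrow(3)$ does not close the gap. That theorem is precisely the statement you are asked to prove, and the point of the corollary is that it is recovered from Theorem \ref{main}, Remark \ref{2.3} and \cite[Theorem 1.6]{GTT}. Quoting it makes the argument circular.

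\textbf{A minor point.} Your gloss on minimal multiplicity is garbled. What you need is $h_A(t)=1+h_1t$, equivalently $\n^2=Q\n$ for a linear system of parameters $Q$. For a standard graded Cohen--Macaulay ring this is the same as $\rme(A)=\dim_kA_1-d+1$.
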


We next illustrate Theorem \ref{main} with an example of a level ring.

\begin{ex}[{\cite[Example 10.8]{GTT}}]
Let $A= k[X_1, X_2, \ldots , X_d]~ (d \ge 1)$ be the polynomial ring over an infinite field $k$. Let $\ell \ge 1$ be an integer, and set $A^{(\ell)} = k[A_\ell]$ the Veronese  subring of $A$. Let $R$ be the $\n {A^{(\ell)}}_\n$-adic completion of the local ring ${A^{(\ell)}}_\n$, where $\n$ denotes the graded maximal ideal of $A^{(\ell)}$. Then the following assertions hold true. 
\begin{itemize}
\item[$(1)$] The local ring $R$ is almost Gorenstein if $d \le 2$.  
\item[$(2)$] Suppose that $d \ge 3$. Then $R$ is an almost Gorenstein local ring if and only if either $\ell \mid d$, or $d=3$ and $\ell=2$. 
\end{itemize}
In particular, if $d\geq 4$ and $\ell\nmid d$, then $R$ is nearly Gorenstein but not almost Gorenstein (see \cite[Lemma 1.5, Corollary 4.8]{HHS}).
\end{ex}

We conclude this paper with an example of a non-level ring to which Theorem \ref{main} applies. 

\begin{ex}
Let $S=k[X_1, X_2, \ldots,X_n]~(n\ge 5)$ be the polynomial ring over a field $k$.  Set
$$
B_n=k[X_iX_j\mid 1\leq i<j\leq n]\subseteq S \ \ \text{with} \ \ \deg(X_iX_j)=1. 
$$
Then $B_n$ is a Cohen-Macaulay standard graded normal domain. Let
$
T_n=\widehat{(B_n)_{(B_n)_+}}
$
be the $(B_n)_+(B_n)_{(B_n)_+}$-adic completion of the local ring 
$(B_n)_{(B_n)_+}$. By \cite[Theorem 1.3]{HM}, together with
Theorem \ref{main}, the local ring $T_n$ is almost Gorenstein if and only if $n$ is even. 
When this is the case, $B_n$ is not level (see \cite[Theorem 1.2]{HM}). 
\end{ex}



\ac{The author would like to express his sincere gratitude to Ryo Takahashi and Ken-ichi Yoshida for independently raising the question that motivated this work, as well as for their valuable comments on the manuscript.}


\end{document}